\documentclass[11pt]{amsart}
\usepackage[T1]{fontenc}
\usepackage{lmodern}
\usepackage{microtype}
\usepackage{amsmath,amssymb,mathtools,mathrsfs}
\usepackage{enumitem}
\usepackage{booktabs}
\usepackage{array}
\usepackage{xcolor}
\usepackage[colorlinks=true,linkcolor=blue,citecolor=blue,urlcolor=blue]{hyperref}
\usepackage[nameinlink,noabbrev]{cleveref}
\usepackage{geometry}
\allowdisplaybreaks

\newtheorem{theorem}{Theorem}[section]
\newtheorem{proposition}[theorem]{Proposition}
\newtheorem{corollary}[theorem]{Corollary}

\theoremstyle{definition}

\newtheorem{remark}[theorem]{Remark}

\newcommand{\dd}{\,\mathrm d}
\newcommand{\Ree}{\operatorname{Re}}
\newcommand{\Imm}{\operatorname{Im}}

\title[Collective contraction in the de Bruijn--Newman heat flow]
{Collective Contraction in the de Bruijn--Newman\\
Heat Flow: Off-Zero Logarithmic Derivatives\\
and Certified Barrier Refinements}
\author{Michel Planat}
\address{Institut FEMTO-ST, CNRS UMR 6174, Universit\'e Marie et Louis Pasteur, Besan\c{c}on, France}
\subjclass[2020]{11M26, 30D15, 65G40}
\keywords{de Bruijn--Newman constant, Riemann xi-function, heat flow, zero dynamics, logarithmic derivative, interval arithmetic, computer-assisted proof}

\begin{document}
\begin{abstract}
We study the standard de Bruijn--Newman family $H_\tau(z)$, where
$\tau$ is the heat-deformation parameter and
$H_0(z)=\tfrac18\,\xi(\tfrac12+\tfrac{i z}{2})$.  Thus the moving zeros
considered below are zeros of $H_\tau$ as a function of $z$; they are not
directly zeros of $\zeta(s)$, except through the usual $\tau=0$ correspondence.
We retain, rather than discard, the collective interaction term in this zero
dynamics.  If $z=x+iy$ is a simple nonreal zero of maximal imaginary height,
we prove the exact differential inequality
\[
 \frac{\dd}{\dd\tau}y^2\le -2-4y^2\mathcal G_\tau(z),
\]
where $\mathcal G_\tau$ is a nonnegative projected interaction sum over the
remaining zeros.  A second exact inequality bounds $\mathcal G_\tau$ from
below by one off-zero logarithmic derivative
$-\operatorname{Im}(H_\tau'/H_\tau)(x+i\eta)$.  This creates a direct
interface with the effective Polymath approximation $H_\tau=B_\tau F_\tau$.
At the exact frontier row
\[
 X=6000000185827,\qquad \tau_0=129/800,\qquad
 y_0^2=87677/2500000,
\]
a directed frozen-index Cauchy certificate proves
$\mathcal G_\tau>3/2$ for $\tau_0\le\tau\le0.178$, $x\ge X$, and
$|y|\le y_0$.  We then combine this contraction with a short extension of the zero-free upper barrier and an independently certified post-$\tau_0$ lower barrier on
$[X,X+1]+i[0.08,0.1809]$.  A two-envelope comparison reduces the maximal
nonreal height to $0.08$ by heat time
$0.1747532546428610755\ldots$; the classical de Bruijn contraction finishes
the landing in another $0.0032$ units.  Relative to the publicly replayable,
publicly replayable, not-yet-peer-reviewed $0.1787854$ certificate package that provides the
initial frontier height bound and upper zero-free barrier, this yields the certified
implication
\[
 \Lambda<0.177954,
 \qquad
 \Lambda\le0.1779532546428610755\ldots\ \text{at the certified landing endpoint}.
\]
The collective contraction, logarithmic-derivative bridge, high-$x$
certificate, and new lower-barrier computation are independent contributions;
the final numerical statement is deliberately labelled audit-relative until
the imported base package receives independent mathematical review.
\end{abstract}

\maketitle

\section{Introduction}\label{sec:introduction}
For the standard de Bruijn--Newman deformation $H_\tau$, the work of
de Bruijn and Newman gives a finite constant $\Lambda$ such that all zeros of
$H_\tau$ are real precisely for $\tau\ge\Lambda$
\cite{deBruijn1950,Newman1976}.  The Riemann hypothesis is equivalent to
$\Lambda\le0$, while Rodgers and Tao proved the complementary inequality
$\Lambda\ge0$ \cite{RodgersTao2020}.  Thus RH is equivalent to
$\Lambda=0$.  The effective heat-flow technology developed by D.~H.~J.
Polymath supplies a bridge from verified finite-height information and explicit
barriers to upper bounds for $\Lambda$ \cite{Polymath2019}; the
verified-height input used here is due to Platt and Trudgian
\cite{PlattTrudgian2021}.

The usual maximal-height argument retains only the interaction with the
conjugate zero and obtains the classical inequality $(y^2)'\le-2$.  Our first
point is that the remaining zero interactions have a definite collective
sign after a suitable projection.  This gives an exact strengthening
\[
 (y^2)'\le-2-4y^2\mathcal G_\tau,
\]
where $\mathcal G_\tau\ge0$.  The resulting gain is useful only if
$\mathcal G_\tau$ can be bounded without locating all neighbouring zeros.
The second point is an off-zero Poisson-kernel comparison that converts this
sum to a single logarithmic derivative of $H_\tau$.  Because the Polymath
method already controls $H_\tau/B_\tau$ by an explicit finite Dirichlet
approximant, the new inequality can be certified in the same high-$x$
regime.

The paper has three logically distinct layers.  First, the contraction and
logarithmic-derivative inequalities are exact analytic statements about the
heat-flow zero dynamics.  Second, a self-contained directed computation using
the published Polymath approximation proves a uniform high-$x$ field bound
$\mathcal G_\tau>3/2$ and an independent lower post-$\tau_0$ barrier.  Third, to
turn these results into the numerical ceiling displayed in the abstract we
import the exact frontier row and the upper-barrier and final-time premises from the
public $0.1787854$ certificate package of Gomila \cite{Gomila2026}.  That
package describes its logical argument as unconditional and fully replayable,
but its public release and repository (checked 29 September 2026) explicitly
state that it has not yet been peer reviewed \cite{Gomila2026}.  We therefore
call our final numerical conclusion
\emph{audit-relative}; this conservative terminology does not weaken the
independent theorems proved here.

The collective-field idea was initially suggested by a genus-two spectral
analysis of close zero pairs and their Lehmer-type external field.  That
geometry, together with the arithmetic of the associated spectral curve, is
developed separately in the companion manuscript \cite{PlanatCompanion2026},
which is intended for a concurrent arXiv posting.  No result from that companion
paper is needed here.  This separation keeps the present argument within the
classical analytic and computer-assisted framework of the de Bruijn--Newman
problem.

The organization is as follows.  Section~\ref{sec:collective-contraction}
derives the collective contraction and the off-zero logarithmic-derivative
bridge.  Theorem~\ref{thm:highx-collective} gives the certified high-$x$
field.  Section~\ref{sec:barrier-transport} establishes the short barrier-extension
transport mechanism.  Section~\ref{sec:lower-barrier} proves the lowered
barrier and the two-envelope comparison leading to the audit-relative numerical
refinement.  The final section records the exact computational dependency and
review boundary.

\section{Collective contraction and an off-zero logarithmic-derivative bridge}
\label{sec:collective-contraction}

We work throughout in the standard Polymath heat-time normalization.  Let
\begin{equation}\label{eq:Htau-definition}
 H_\tau(z)=\int_0^\infty e^{\tau u^2}\Phi(u)\cos(zu)\,\dd u,
\end{equation}
where
\[
 \Phi(u)=\sum_{n\ge1}\bigl(2\pi^2n^4e^{9u}-3\pi n^2e^{5u}\bigr)
          e^{-\pi n^2e^{4u}}
\]
is the standard rapidly decreasing Riemann kernel.  The zeros of $H_\tau$
evolve by the usual heat-flow zero dynamics.  The central observation
of this paper is that, for a nonreal zero of maximal imaginary height, the
interaction with all other zeros has a definite sign and should not be
discarded.

Let $z=x+iy$, $y>0$, be a simple zero of $H_\tau$ with maximal imaginary
height, so that every zero $\rho$ of $H_\tau$ satisfies
$|\Imm\rho|\le y$.  For every $\tau>0$ this maximum is attained whenever nonreal zeros exist: Ki--Kim--Lee~\cite{KiKimLee2009} prove that $H_\tau$ has only finitely many nonreal zeros (indeed all but finitely many zeros are real and simple).  Define the projected external-field sum
\begin{equation}\label{eq:G-def}
 \mathcal G_\tau(z):=
 \sum_{\rho\ne z,\bar z}
 \frac{1}{(\Ree\rho-x)^2+4y^2},
\end{equation}
where zeros are counted with multiplicity and the sum is grouped according to
the real symmetries of $H_\tau$.

\begin{theorem}[Collective vertical contraction]\label{thm:collective-contraction}
Along every interval on which the maximal-height zero $z(\tau)=x(\tau)+iy(\tau)$
is simple,
\begin{equation}\label{eq:collective-contraction}
 \boxed{
 \frac{\dd}{\dd\tau}y(\tau)^2
 \le -2-4y(\tau)^2\mathcal G_\tau(z(\tau)).}
\end{equation}
In particular the classical de Bruijn estimate $(y^2)'\le-2$
\cite[Theorem~13]{deBruijn1950} is recovered by dropping the nonnegative
collective term.\end{theorem}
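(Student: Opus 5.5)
The plan is to read \eqref{eq:collective-contraction} off the standard heat-flow zero dynamics, keeping every interaction term, and then to check that each pair of conjugate zeros contributes at least its share of $\mathcal G_\tau$.

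\emph{Step 1 (zero dynamics).} Differentiating \eqref{eq:Htau-definition} under the integral gives
\[
\partial_\tau H_\tau=\int_0^\infty u^2e^{\tau u^2}\Phi(u)\cos(zu)\dd u=-H_\tau''.
\]
At a simple zero $z(\tau)$, the implicit function theorem applied to $H_\tau(z(\tau))=0$ gives
\[
z'=-\frac{\partial_\tau H_\tau}{H_\tau'}=\frac{H_\tau''}{H_\tau'}(z).
\]
$H_\tau$ is even, real on $\mathbb R$, and entire of order $1$ in $z^2$. Its Hadamard product is therefore taken over pairs $\pm\rho$, and the zero set is invariant under $\rho\mapsto-\rho$ and $\rho\mapsto\bar\rho$. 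Logarithmic differentiation then yields
\[
\frac{H_\tau''}{H_\tau'}(z)=2\sum_{\rho\ne z}\frac{1}{z-\rho}.
\]
This sum converges when grouped symmetrically, and that is exactly the grouping prescribed after \eqref{eq:G-def}. Taking imaginary parts, with $\rho=a+ib$ and $d=a-x$, gives
\[
y'=-2\sum_{\rho\ne z}\frac{y-b}{d^2+(y-b)^2}.
\]
The term $\rho=\bar z$ (where $d=0$, $b=-y$) equals $-2\cdot 2y/(4y^2)=-1/y$. Hence
\[
(y^2)'=2yy'=-2-4y\sum_{\rho\ne z,\bar z}g(y-b),\qquad g(s):=\frac{s}{d^2+s^2}.
\]
Because $|b|\le y$ by maximality, every $g(y-b)$ is nonnegative. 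Dropping these terms already recovers de Bruijn's $(y^2)'\le-2$.

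\emph{Step 2 (pairwise comparison).} It remains to show
\[
\sum_{\rho\ne z,\bar z}g(y-b)\;\ge\; y\sum_{\rho\ne z,\bar z}\frac{1}{d^2+4y^2}.
\]
I will prove this group by group, using the invariance of the zero set under conjugation.

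For a real zero ($b=0$), $g(y)=y/(d^2+y^2)\ge y/(d^2+4y^2)$.

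For a nonreal pair $\rho,\bar\rho$ with $0<b\le y$, put $s=y-b$ and $t=y+b$, so $s,t\ge0$ and $s+t=2y$. The required inequality is
\[
g(s)+g(t)\ge g(s+t)=\frac{2y}{d^2+4y^2}.
\]
This follows termwise, since $s^2,t^2\le(s+t)^2$ gives
\[
\frac{s}{d^2+s^2}+\frac{t}{d^2+t^2}\ge\frac{s}{d^2+(s+t)^2}+\frac{t}{d^2+(s+t)^2}.
\]
This covers the borderline case $b=y$, namely other zeros of maximal height, where equality holds. Multiplicities are handled by repeating the relevant terms.

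Summing over all groups and multiplying by $-4y$ gives \eqref{eq:collective-contraction}.

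\emph{Main obstacle.} The delicate point is not the algebra but the analytic justification of Step 1 for the whole infinite family. First, the conditionally convergent sum $\sum 1/(z-\rho)$ must equal $H''/(2H')$ under the symmetric grouping. Second, passing to imaginary parts and regrouping into conjugate pairs must be legitimate, and comparing with $\mathcal G_\tau$ must not depend on rearrangement. I will handle this by pairing $\rho$ with $-\bar\rho$ and with $\bar\rho$. For zeros with $|\Ree\rho|\to\infty$, $g(y-b)=O(d^{-2})$ and $1/(d^2+4y^2)=O(d^{-2})$. Since $H_\tau$ has order $1$ in $z$, the zeros satisfy $\sum|\rho|^{-2}<\infty$, so both sides converge absolutely after grouping. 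By Ki--Kim--Lee only finitely many zeros are nonreal, so the maximal height is attained, and simplicity of $z(\tau)$ gives differentiability of $y(\tau)$ on the stated interval.
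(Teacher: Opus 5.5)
Your proof is correct and is essentially the paper's argument: the same zero dynamics $\dot z=2\sum_{\rho\ne z}(z-\rho)^{-1}$, the conjugate $\bar z$ contributing $-1/y$, and a group-by-group comparison of each real zero and each external conjugate pair with $1/(d^2+4y^2)$. The one difference is that you prove the pair inequality termwise from $s^2,t^2\le(s+t)^2$ (i.e.\ because $g(s)/s$ is nonincreasing), which is a more direct route than the paper's explicit factorization \eqref{eq:pair-kernel}; the factorization has the advantage of exhibiting the exact nonnegative remainder.

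One minor slip: $H_\tau$ has order $1$ in $z$, i.e.\ order $1/2$ in $z^2$, not order $1$ in $z^2$. This fact, together with evenness, is what excludes an $e^{cz^2}$ factor from the product and hence any extra drift term in $\dot z$. Your final paragraph already uses the correct statement.
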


\begin{proof}
For a simple zero the standard zero dynamics are
\[
 \dot z=2\sum_{\rho\ne z}\frac1{z-\rho},
\]
with the sum symmetrically grouped; see \cite{Polymath2019}.  The conjugate
zero $\bar z$ contributes $-1/y$ to $\dot y$.  Group any other nonreal pair as
$a\pm iv$, $0\le v\le y$, and put $d=x-a$.  Its contribution to $-\dot y/2$
is
\[
 \frac{y-v}{d^2+(y-v)^2}+\frac{y+v}{d^2+(y+v)^2}.
\]
The exact identity
\begin{align}
&\frac{y-v}{d^2+(y-v)^2}+\frac{y+v}{d^2+(y+v)^2}
 -\frac{2y}{d^2+4y^2}\notag\\
&\quad=
\frac{2y(y^2-v^2)(3d^2+v^2+3y^2)}
 {(d^2+4y^2)(d^2+(y-v)^2)(d^2+(y+v)^2)}\ge0
\label{eq:pair-kernel}
\end{align}
uses only $0\le v\le y$.  A real zero gives the simpler inequality
\[
 \frac{y}{d^2+y^2}\ge\frac{y}{d^2+4y^2}.
\]
Multiplication by $2y$ and summation over the external zeros gives
\eqref{eq:collective-contraction}.  The factorization is audited in
\path{certify_collective_contraction.py}.
\end{proof}

\begin{corollary}[Conditional improved landing time]
\label{cor:landing-time}
Suppose that at a standard heat time $\tau_0$ all nonreal zeros have
$|\Imm z|\le y_0$, and suppose that throughout the subsequent nonreal phase
every maximal-height branch satisfies
\begin{equation}\label{eq:G-lower}
 \mathcal G_\tau(z)\ge G_0>0,
\end{equation}
with the same bound interpreted by continuity at isolated collision times.
Then every zero is real by
\begin{equation}\label{eq:improved-landing}
 \boxed{
 \tau_0+\ell_{G_0}(y_0),\qquad
 \ell_{G_0}(y_0)=\frac{1}{4G_0}
 \log\!\left(1+2G_0y_0^2\right).}
\end{equation}
Hence, whenever the first hypothesis is supplied by a Polymath-type maximal-height bound
criterion, one obtains conditionally
\begin{equation}\label{eq:conditional-Lambda}
 \Lambda\le\tau_0+\ell_{G_0}(y_0).
\end{equation}
Moreover $\ell_{G_0}(y_0)\to y_0^2/2$ as $G_0\downarrow0$.
\end{corollary}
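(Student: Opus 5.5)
The plan is to integrate the differential inequality of Theorem~\ref{thm:collective-contraction} after replacing $\mathcal G_\tau$ by its lower bound $G_0$. Set $Y(\tau)$ for the maximal squared imaginary height of the nonreal zeros of $H_\tau$ at time $\tau\ge\tau_0$, so $Y(\tau_0)\le y_0^2$. On every interval where the maximal-height zero is simple, \eqref{eq:collective-contraction} and \eqref{eq:G-lower} give
\[
 Y'\le -2-4G_0Y .
\]
The comparison solution of $w'=-2-4G_0w$ with $w(\tau_0)=y_0^2$ is
\[
 w(\tau)=\Bigl(y_0^2+\tfrac{1}{2G_0}\Bigr)e^{-4G_0(\tau-\tau_0)}-\tfrac{1}{2G_0}.
\]
It vanishes exactly when $e^{4G_0(\tau-\tau_0)}=1+2G_0y_0^2$, that is, at $\tau=\tau_0+\ell_{G_0}(y_0)$. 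Equivalently, I would set $u=Y+1/(2G_0)$, observe that $u'\le-4G_0u$, and apply Gr\"onwall's inequality.

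The substantive step is to justify the comparison for the function $Y$, which is only a maximum over branches. By Ki--Kim--Lee there are finitely many nonreal zeros, so $Y$ is a maximum of finitely many continuous branch functions $y_j^2$. Away from a discrete set of collision times these branches are smooth, and every branch that attains the maximum satisfies the inequality. I would then work with the upper Dini derivative: at any $\tau$, $D^+Y(\tau)\le\max_j (y_j^2)'(\tau)$, where the maximum runs over branches realizing $Y(\tau)$, and this is $\le -2-4G_0Y(\tau)$.

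At the isolated collision times I would use two facts. First, $Y$ is continuous, since zeros move continuously and multiple zeros split continuously. Second, the hypothesis interpreted by continuity means the bound still holds in the one-sided limit. A standard Dini-derivative comparison lemma, which needs $D^+u\le-4G_0u$ only off a countable set for a continuous $u$, then gives $u(\tau)\le u(\tau_0)e^{-4G_0(\tau-\tau_0)}$.

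A second subtlety is the end of the nonreal phase. Once $Y$ reaches $0$, all zeros are real. They remain real afterwards by de Bruijn's theorem that reality of zeros is preserved forward in heat time. Hence the comparison gives $Y\le w$ on the nonreal phase, which forces all zeros to be real by $\tau_0+\ell_{G_0}(y_0)$. Then \eqref{eq:conditional-Lambda} follows from the definition of $\Lambda$ as the threshold for real-rootedness.

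The limit is elementary: $\log(1+t)=t+O(t^2)$ gives $\ell_{G_0}(y_0)=\frac{2G_0y_0^2+O(G_0^2)}{4G_0}\to y_0^2/2$.

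I expect the main obstacle to be the rigorous handling of the maximum at collision times, where a maximal-height zero may become double, so that Theorem~\ref{thm:collective-contraction} does not apply directly. My first approach is the Dini-derivative argument outlined above. If needed, I would fall back on approximating collision times by nearby simple times, using that the relevant collision set is discrete because there are only finitely many nonreal zeros whose motion is real-analytic in $\tau$ between collisions.
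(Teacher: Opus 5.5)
Your proposal is correct and follows the paper's route: the paper likewise sets $u=y^2$, combines Theorem~\ref{thm:collective-contraction} with $\mathcal G_\tau\ge G_0$ to get $u'\le-2-4G_0u$, and compares with the explicit solution $\bigl(u_0+\tfrac{1}{2G_0}\bigr)e^{-4G_0s}-\tfrac{1}{2G_0}$, which vanishes at $s=\ell_{G_0}(y_0)$. The paper's proof is only a few lines, and your additional steps make explicit what it leaves implicit: the Dini-derivative treatment of the maximum over finitely many branches, the countable-exception comparison lemma at collision times, and the appeal to de Bruijn's forward preservation of real zeros.
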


\begin{proof}
Put $u=y^2$.  Theorem~\ref{thm:collective-contraction} and
\eqref{eq:G-lower} give
$u'\le-2-4G_0u$.  The comparison solution with initial value $u_0=y_0^2$ is
\[
 u_*(s)=\left(u_0+\frac1{2G_0}\right)e^{-4G_0s}
 -\frac1{2G_0},
\]
which reaches zero exactly at $s=\ell_{G_0}(y_0)$.  The small-$G_0$ limit is
immediate.  The algebra is included in
\path{certify_collective_contraction.py}.
\end{proof}

The remaining issue is therefore to lower-bound $\mathcal G_\tau$ without
locating all neighbouring zeros.  The next proposition converts that problem
to one off-zero logarithmic derivative.

\begin{proposition}[Logarithmic-derivative bridge]
\label{prop:log-bridge}
For the maximal-height zero $z=x+iy$ above, choose $\eta\ge\sqrt5\,y$ and put
\begin{equation}\label{eq:Leta}
 L_\eta(\tau;x):=-\Imm\frac{H_\tau'}{H_\tau}(x+i\eta).
\end{equation}
Then $x+i\eta$ is zero-free and
\begin{equation}\label{eq:log-bridge}
 \boxed{
 \mathcal G_\tau(z)\ge
 \frac{L_\eta(\tau;x)}{\eta}
 -\frac{2}{\eta^2-y^2}.}
\end{equation}
Consequently, if $y\le y_0$ and $\eta\ge\sqrt5\,y_0$, the sufficient condition
\begin{equation}\label{eq:L-target}
 L_\eta(\tau;x)\ge
 \eta\left(G_0+\frac{2}{\eta^2-y_0^2}\right)
\end{equation}
forces $\mathcal G_\tau(z)\ge G_0$.

\end{proposition}

\begin{proof}
The even, real entire function $H_\tau$ has order one.  Grouping its canonical
product according to $\rho\mapsto\bar\rho$ and $\rho\mapsto-\rho$ gives, above
the maximal zero height,
\[
 L_\eta(\tau;x)=
 \sum_\rho
 \frac{\eta-\Imm\rho}
 {(x-\Ree\rho)^2+(\eta-\Imm\rho)^2}.
\]
The own pair $x\pm iy$ contributes
$2\eta/(\eta^2-y^2)$.  For an external conjugate pair $a\pm iv$ put
$D=(x-a)/y$, $r=v/y$, and $c=\eta/y$.  The desired upper comparison of its
Poisson contribution with the corresponding two terms of
$\eta\mathcal G_\tau$ is
\[
 \frac{c-r}{D^2+(c-r)^2}+\frac{c+r}{D^2+(c+r)^2}
 \le\frac{2c}{D^2+4}.
\]
After clearing positive denominators, the right-minus-left numerator divided
by $2c$ is
\[
 D^2(c^2+3r^2-4)
 +c^4-2c^2r^2-4c^2+r^4+4r^2.
\]
When $c^2\ge5$ and $0\le r^2\le1$, the coefficient of $D^2$ is positive.
The remaining term decreases in $r^2$ and at $r^2=1$ equals
$(c^2-1)(c^2-5)\ge0$.  A real external zero satisfies the still simpler
comparison
$\eta/(d^2+\eta^2)\le\eta/(d^2+4y^2)$.
Summation gives
$L_\eta-2\eta/(\eta^2-y^2)\le\eta\mathcal G_\tau$, proving
\eqref{eq:log-bridge}.  The exact factorization is audited in
\path{certify_collective_contraction.py}.
\end{proof}

\begin{proposition}[Interface with the Polymath effective approximation]
\label{prop:Polymath-interface}
Use the standard normalization
\[
 H_\tau(z)=B_\tau(z)F_\tau(z),\qquad
 B_\tau(z)=M_\tau\!\left(\frac{1-iz}{2}\right),
\]
and suppose at $z=x+i\eta$ that an effective approximation $f_\tau$ satisfies
\[
 |F_\tau-f_\tau|\le\epsilon_0<|f_\tau|,
 \qquad
 |F_\tau'-f_\tau'|\le\epsilon_1.
\]
Then, with $s=(1-iz)/2$,
\begin{align}
 L_\eta(\tau;x)
 &\ge \frac12\Ree\frac{M_\tau'}{M_\tau}(s)
 -\Imm\frac{f_\tau'}{f_\tau}(z)-\mathcal E_1,
 \label{eq:Polymath-L-lower}\\
 \mathcal E_1
 &:=\frac{|f_\tau|\epsilon_1+|f_\tau'|\epsilon_0}
 {|f_\tau|(|f_\tau|-\epsilon_0)}.
 \label{eq:Polymath-derivative-error}
\end{align}
Thus a directed lower bound for the right-hand side of
\eqref{eq:Polymath-L-lower}, together with
\eqref{eq:L-target}, is a sufficient certificate for a collective landing
constant $G_0$.
\end{proposition}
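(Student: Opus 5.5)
The plan is to differentiate the factorization $H_\tau=B_\tau F_\tau$ logarithmically at $z=x+i\eta$ and bound the two pieces separately. The first piece is the exact gamma-type factor $B_\tau$, whose contribution can be written in closed form. The second is $F_\tau$, which is replaced by $f_\tau$ at a perturbative cost.

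Since $B_\tau$ and $F_\tau$ are nonvanishing near $z$ (granted $|F_\tau|\ge|f_\tau|-\epsilon_0>0$), we have
\[
 \frac{H_\tau'}{H_\tau}(z)=\frac{B_\tau'}{B_\tau}(z)+\frac{F_\tau'}{F_\tau}(z).
\]
The chain rule with $s=(1-iz)/2$ gives $\dd s/\dd z=-i/2$, so $B_\tau'/B_\tau(z)=-\tfrac{i}{2}\,M_\tau'/M_\tau(s)$. Taking $-\Imm$ and using $-\Imm(-\tfrac i2 w)=\tfrac12\Ree w$, this piece contributes exactly $\tfrac12\Ree\,(M_\tau'/M_\tau)(s)$. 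This matches the first term of \eqref{eq:Polymath-L-lower} with no loss.

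For the second piece I will write
\[
 -\Imm\frac{F_\tau'}{F_\tau}\ge-\Imm\frac{f_\tau'}{f_\tau}-\left|\frac{F_\tau'}{F_\tau}-\frac{f_\tau'}{f_\tau}\right|
\]
and bound the modulus using the algebraic identity
\[
 \frac{F'}{F}-\frac{f'}{f}=\frac{f(F'-f')-f'(F-f)}{Ff}.
\]
The numerator is at most $|f_\tau|\epsilon_1+|f_\tau'|\epsilon_0$ by the hypotheses. The denominator is at least $|f_\tau|(|f_\tau|-\epsilon_0)$ by the reverse triangle inequality $|F_\tau|\ge|f_\tau|-\epsilon_0$. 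Together these give exactly $\mathcal E_1$ in \eqref{eq:Polymath-derivative-error}, which yields \eqref{eq:Polymath-L-lower}.

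The closing sentence of the statement then follows by chaining three results. A directed lower bound for the right side of \eqref{eq:Polymath-L-lower} that exceeds the target \eqref{eq:L-target} forces $\mathcal G_\tau\ge G_0$ by Proposition~\ref{prop:log-bridge}, and Corollary~\ref{cor:landing-time} converts this into a landing time.

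The only genuine subtlety I expect is bookkeeping rather than analysis: $F_\tau'$ must be interpreted as the $z$-derivative, consistent with $H_\tau'$. If the effective bounds $\epsilon_1$ are instead stated for the $s$-derivative, a factor $1/2$ has to be inserted, and $|\dd s/\dd z|=1/2$ makes that harmless. The other point is checking $B_\tau(z)\ne0$, which holds because $M_\tau$ is a product of exponentials and a nonvanishing gamma factor on the relevant half-plane.
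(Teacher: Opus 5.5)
Your proposal is correct and is essentially the paper's proof: the chain rule with $\dd s/\dd z=-i/2$ gives the exact $B_\tau$ contribution $\tfrac12\Ree(M_\tau'/M_\tau)(s)$, and the identity $F'/F-f'/f=\bigl(f(F'-f')-f'(F-f)\bigr)/(Ff)$ together with $|F_\tau|\ge|f_\tau|-\epsilon_0$ yields $\mathcal E_1$. Your explicit check that $B_\tau(z)\ne0$ is a point the paper leaves implicit, but your reason for it is slightly off: the Polymath $M_\tau$ is an elementary Stirling-type expression containing the factor $s(s-1)$ rather than a gamma factor, so $B_\tau(z)\ne0$ holds because $s=(1-iz)/2$ is far from $0$ and $1$ at the probe point.
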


\begin{proof}
Since $s'(z)=-i/2$,
\[
 -\Imm\frac{B_\tau'}{B_\tau}
 =\frac12\Ree\frac{M_\tau'}{M_\tau}(s).
\]
Writing $F=f+e$ and $F'=f'+e_1$ gives the exact identity
\[
 \frac{F'}F-\frac{f'}f=\frac{e_1f-f'e}{fF},
\]
from which \eqref{eq:Polymath-derivative-error} follows by the triangle
inequality.  The symbolic algebra is audited in
\path{certify_logderivative_bridge.py}.
\end{proof}

\begin{theorem}[Certified high-$x$ collective field]\label{thm:highx-collective}
Set
\begin{equation}\label{eq:frontier-row}
 \boxed{
 X=6000000185827,\qquad
 \tau_0=\frac{129}{800},\qquad
 y_0^2=\frac{87677}{2500000}.}
\end{equation}
For every
\[
 \tau_0\le\tau\le\frac{89}{500}=0.178
\]
and every simple maximal-height nonreal zero
$z=x+iy$ of $H_\tau$ satisfying
\begin{equation}\label{eq:highx-sector}
 x\ge X,\qquad |y|\le y_0,
\end{equation}
one has the directed bound
\begin{equation}\label{eq:highx-G-lower}
 \boxed{\mathcal G_\tau(z)>1.60028669>\frac32.}
\end{equation}
Consequently, if such a maximal branch remains in the sector
\eqref{eq:highx-sector} throughout its subsequent nonreal evolution,
then it reaches the real axis no later than
\begin{equation}\label{eq:sector-landing}
 \boxed{
 \tau_0+\frac16\log(1+3y_0^2)
 =0.1779229222821547\ldots<0.178.}
\end{equation}
All inequalities in \eqref{eq:highx-G-lower} are certified by outward-rounded arithmetic.
Equation \eqref{eq:sector-landing} is a conditional \emph{sector} landing
conclusion and is not, by itself, a global upper bound for the
De Bruijn--Newman constant.
\end{theorem}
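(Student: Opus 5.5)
The plan is to reduce \eqref{eq:highx-G-lower} to a single directed lower bound on the off-zero logarithmic derivative, using Proposition~\ref{prop:log-bridge} and then Proposition~\ref{prop:Polymath-interface}.

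\textbf{Step 1: fix $\eta$ and the target.} We take one fixed height, say $\eta=\sqrt5\,y_0$, rounded up slightly to a rational value. Since $|y|\le y_0$, this gives $\eta\ge\sqrt5\,y$ for every zero in the sector. Proposition~\ref{prop:log-bridge} then shows that $\mathcal G_\tau(z)>1.60028669$ follows from
\[
 L_\eta(\tau;x)>\eta\Bigl(1.60028669+\frac{2}{\eta^2-y_0^2}\Bigr),
\]
which is uniform in $\tau\in[\tau_0,0.178]$ and in $x\ge X$. The right-hand side is an explicit rational-type constant, and it will be evaluated with outward rounding. If the margin turns out too tight, $\eta$ can be chosen larger than $\sqrt5\,y_0$, since the bridge only requires $\eta\ge\sqrt5\,y$.

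\textbf{Step 2: pass to Polymath quantities.} By Proposition~\ref{prop:Polymath-interface} it suffices to bound three terms on the half-line $x\ge X$, $\tau\in[\tau_0,0.178]$:
\begin{itemize}
\item a lower bound for $\tfrac12\Ree(M_\tau'/M_\tau)(s)$;
\item a lower bound for $-\Imm(f_\tau'/f_\tau)(x+i\eta)$;
\item an upper bound for $\mathcal E_1$.
\end{itemize}
The gamma-factor term is computed by Stirling. It grows like $\tfrac14\log(x/4\pi)$, plus a $\tau$-dependent correction from the $e^{\tau u^2}$ deformation (in Polymath's form, $M_\tau$ contains $\exp(\tfrac\tau4\alpha(s)^2)$). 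This term is therefore monotone increasing for large $x$, so its infimum is essentially attained at $x=X$. That endpoint value is about $\tfrac14\log(X/4\pi)\approx 6.7$, which comfortably exceeds the target, since $\eta\approx0.42$ makes the target only of size $\approx 12$. Hence the main work is controlling the oscillatory Dirichlet term. The error $\mathcal E_1$ is handled with Polymath's published explicit bounds for $\epsilon_0,\epsilon_1$ (their $E_1,E_2,E_3$-type estimates). These decay in $x$, so their values at $x=X$ dominate.

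\textbf{Step 3 (main obstacle): uniform control of the Dirichlet approximant.} Here $f_\tau=\sum_{n\le N}b_n^\tau n^{-s_*}$ plus a mirror term, with $N=\lfloor\sqrt{x/4\pi}\rfloor$ varying with $x$. The term $-\Imm(f'/f)$ can be negative and is not monotone in $x$, so a naive sup bound over an infinite range is impossible. I would first try the frozen-index Cauchy route suggested in the abstract. Writing $f'/f$ as a ratio, we bound $|f'|$ above and $|f|$ below uniformly, using the triangle inequality $|f|\ge 1-\sum_{n\ge2}|b_n^\tau|n^{-\sigma}$ with $\sigma$ the effective real part. This is the standard Polymath "Euler mollifier / triangle" lower bound, valid for large $x$, and it is uniform in $x$ once $N$ is frozen on each block of constant $N$. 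Since the tails decay, the blocks reduce to a single worst case. Cauchy estimates on a small disc around $x+i\eta$ then bound $|f'|$ via $\max|f|$ on the circle. The result is that
\[
 |\Imm f'/f|\le \frac{\max_{\text{circle}}|f|}{r\,\min|f|}\,,
\]
a uniform constant. Combining, the certificate reduces to the inequality
\[
 \text{Stirling term at }X-\text{(uniform } f'/f\text{ bound)}-\mathcal E_1(X)>\text{target},
\]
evaluated with interval arithmetic. The $\tau$-dependence is handled by monotonicity in $\tau$ or by a few subintervals of $[\tau_0,0.178]$. If the frozen-index bound is too lossy near $X$, the fallback is to mollify $f$ by a short Euler product, as Polymath does.

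\textbf{Step 4: landing consequence.} With $G_0=3/2$, Corollary~\ref{cor:landing-time} gives
\[
 \ell_{3/2}(y_0)=\tfrac16\log(1+3y_0^2).
\]
We evaluate $\tau_0+\ell_{3/2}(y_0)$ with outward rounding and check that it is $<0.178$. This check keeps the branch inside the time window $[\tau_0,0.178]$ on which the bound $\mathcal G_\tau>3/2$ was certified, which yields \eqref{eq:sector-landing}.
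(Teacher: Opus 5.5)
Steps 1, 2 and 4 follow the paper's architecture: the bridge, the Polymath interface, and the landing corollary with $G_0=3/2$. Step 3, however, puts Cauchy's estimate in the wrong place, and this is a genuine gap.

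\textbf{Where Cauchy's estimate belongs.} Applying Cauchy to $f$ itself gives $|\Imm(f'/f)|\le\max_{|w-z|=r}|f(w)|/(r\min|f|)$, which is at least $1/r$. The room available is about $3$ units:
\begin{itemize}
\item the Stirling term is only $\tfrac12\Ree(M_\tau'/M_\tau)\approx6.723$;
\item even at a good probe height the target in \eqref{eq:L-target} is about $3.68$;
\item so $-\Imm(f'/f)-\mathcal E_1$ must stay above roughly $-3$.
\end{itemize}
No admissible disk achieves this. The disk must stay inside the Polymath range $0\le\Imm z\le1$ and inside the region where $|f|$ is certifiably bounded below. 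With $r=1/250$ your bound is already $\ge250$.

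The derivative of the explicit approximant has to be bounded directly, by logarithmically weighted Dirichlet majorants. The paper gets $|f'|<0.680721$ against $|f|>0.23092169$ at $x+0.99i$.

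Cauchy's estimate is needed instead for $\epsilon_1=|F_\tau'-f_\tau'|$. Contrary to your Step 2, Polymath's Theorem~1.3 bounds only the value error $|H_\tau/B_\tau-f_\tau|$, so there is no published $\epsilon_1$. The paper constructs it as follows:
\begin{itemize}
\item It freezes the Riemann--Siegel index on a disk of radius $1/250$, so that $E^{\rm fr}=F_\tau-f_\tau^{\rm fr}$ is holomorphic. For the $\overline{s_*}$-leg this uses $\overline{s_*}+\kappa-y=1-s+\frac\tau2\alpha(1-s)$.
\item It adds the one-term jump, below $2.257\times10^{-9}$, for a disk that straddles an index boundary.
\item It divides $\sup|E^{\rm fr}|<3.345\times10^{-9}$ by $r$, giving $\epsilon_1<8.361\times10^{-7}$.
\end{itemize}
Without this device, Proposition~\ref{prop:Polymath-interface} has no $\epsilon_1$ to use.

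\textbf{The probe height.} Your default $\eta=\sqrt5\,y_0\approx0.4188$ also fails. The target is $\eta G_0+\sqrt5/(2y_0)\approx6.64$, not $\approx12$. If it were $12$, the Stirling value $6.72$ would not exceed it, so that sentence is self-contradictory. With the correct value, you would need the oscillatory term certified above about $-0.08$ uniformly in $x\ge X$, which is impossible. At $\Ree s=(1+\eta)/2\approx0.71$ and $\tau=\tau_0$, the first-leg weights $b_n^\tau n^{-\Ree s_*}$ sum to roughly $1.25>1$ over $n\ge2$. So the triangle inequality cannot even certify $f\ne0$.

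Since $\eta\mapsto\eta\bigl(G_0+2/(\eta^2-y_0^2)\bigr)$ decreases up to $\eta\approx1.16$, $\eta$ must be pushed as high as the range $\Imm z\le1$ allows for the Cauchy disk. The paper's $\eta=0.99$ with $r=1/250$ gives target $\approx3.6795$ and $\Ree s\approx0.995$. Your fallback of taking $\eta$ larger is therefore mandatory, not optional.

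\textbf{Uniformity in $N$.} Your claim that the blocks of constant $N$ reduce to one worst case ``since the tails decay'' is not automatic. The triangle majorant $\sum_{n\le N}$ gains terms as $N$ grows. The paper splits at $N_0=690988$ and bounds the entire moving tail by monotone integrals.

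Step 4 is fine as written.
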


\begin{proof}
Take the off-zero probe height
\[
 \eta=\frac{99}{100}
\]
and a Cauchy radius $r=1/250$.  The effective approximation of
\cref{prop:Polymath-interface} is used with the Riemann--Siegel index
frozen at the centre of the Cauchy disk.  At the left endpoint $x=X$ the
canonical index is $N_0=690988$.  Adjacent index windows are separated by
more than $1.7\times10^7$, so a radius-$r$ disk can meet at most one
index boundary for every $x\ge X$.

There is a small notational point needed for the Cauchy step.  Put
$s=(1-iz)/2$.  In the Polymath formula the second Dirichlet leg is written
using $\overline{s_*}$ and $\kappa$, but on the physical slice one has the
exact identity
\[
 \overline{s_*}+\kappa-y
 =1-s+\frac{\tau}{2}\alpha(1-s).
\]
Hence, after multiplication by $n^y$, that leg is a holomorphic function of
$z$ whenever the index $N$ is frozen.  Its exponent derivative has the same
$\frac12|1+(\tau/2)\alpha'|$ bound as the first leg.  Thus the frozen-index
approximant to which Cauchy's estimate is applied is genuinely holomorphic;
no differentiation of an antiholomorphic quantity is involved.

Directed integral majorants for the complete Dirichlet sums give bounds
uniformly on the heat strip in the statement and for every Riemann--Siegel
index $N\ge N_0$.  The certificate splits the sum at $N_0$: the finite low
range is bounded directly and the entire moving tail $N>N_0$ by monotone
Gaussian--logarithmic integrals, so no sampling or eventual constancy in $N$
is assumed.  The uniformity checks are explicit.  On the Cauchy disk
$0.986\le \operatorname{Im}z\le0.994$, the positive-part correction in the
Polymath lower bound for $\operatorname{Re}s_*$ vanishes; the Dirichlet weight
envelopes decrease with $\tau$, so their worst value is at $\tau_0$; the tiny
moving-tail logarithmic correction is instead bounded at
$(\tau,y)=(0.178,0.994)$; and the factor $N^{|\kappa|}$ occurring in the
published $e_A+e_B$ error is retained explicitly.  Thus
\begin{equation}\label{eq:f-bounds}
 |f_\tau(x+i\eta)|>0.23092169,
 \qquad
 |f_\tau'(x+i\eta)|<0.680721.
\end{equation}
The canonical Polymath value error on the Cauchy disk is bounded by
$1.088\times10^{-9}$.  Here the published $N^{|\kappa|}$ multiplier is at
most $1.000000000001$ throughout the disk.  If the disk crosses the unique
possible index boundary, the two adjacent frozen-index approximants differ
by exactly one complete Dirichlet term; its modulus is bounded by
$2.257\times10^{-9}$.  The jump envelope decreases both with heat time and
with the boundary index in the present range, so the first boundary at
$N_0+1$ and time $\tau_0$ is the worst case.  Hence the holomorphic
frozen-index error on the disk satisfies
\[
 \sup|E_{\tau,N}^{\rm fr}|<3.345\times10^{-9}.
\]
Cauchy's estimate therefore yields the derivative enclosure
\begin{equation}\label{eq:eps1}
 \boxed{|F_\tau'-f_\tau'|<8.361\times10^{-7}.}
\end{equation}
At the disk centre we use the looser value bound
$|F_\tau-f_\tau|<1.2\times10^{-9}$.

The explicit $B_\tau$ factor in the Polymath normalization satisfies
\[
 \frac12\Ree\frac{M_\tau'}{M_\tau}>6.72293909.
\]
For the derivative of the prefactor $\gamma$, the quantity
$|M_\tau'/M_\tau|$ itself grows only logarithmically with $x$, whereas the
published bound $|\gamma|\ll x^{-0.495}$ decays.  The certificate checks that
the resulting product is decreasing for $x\ge X$, so its maximum occurs at
the left endpoint.  Together with \eqref{eq:Polymath-derivative-error}
and \eqref{eq:f-bounds}--\eqref{eq:eps1}, this gives
$\mathcal E_1<3.651\times10^{-6}$.  Consequently
\begin{equation}\label{eq:highx-Leta}
 L_{0.99}(\tau;x)>3.67945718.
\end{equation}
Substitution of \eqref{eq:highx-Leta} into the exact bridge
\eqref{eq:log-bridge}, using only $y^2\le y_0^2$, gives the stronger
numerical enclosure
\[
 \mathcal G_\tau(z)>1.60028669,
\]
which proves \eqref{eq:highx-G-lower} with a visible margin.

Finally \cref{thm:collective-contraction} gives
$(y^2)'\le-2-6y^2$.  Applying
\cref{cor:landing-time} with $G_0=3/2$ yields
\eqref{eq:sector-landing}.  The complete outward-rounded calculation is
recorded in \path{certify_highx_collective_field.py}.
\end{proof}

\begin{remark}[From sector landing to a short barrier-extension problem]\label{rem:transport-gap}
Theorem~\ref{thm:highx-collective} by itself does not prove
$\Lambda\le0.1779229223$, because a maximizing branch could in principle
leave the high-$x$ sector.  The next section shows that
one does not need to control horizontal transport for the full landing time:
it suffices to extend the existing unit-width barrier until the maximal-height envelope reaches
its lower edge $y_b=0.1809$, after which the global de Bruijn contraction
finishes the argument.
\end{remark}

\section{Barrier transport and hybrid global landing}
\label{sec:barrier-transport}

The sector result of \cref{thm:highx-collective} initially leaves a
horizontal escape problem.  For the exact frontier row this problem can be
avoided: it is enough to retain the high-$x$ contraction only until the global
height envelope reaches the lower edge of the already certified closed barrier, and then
return to the classical global de Bruijn contraction.

Put
\begin{equation}\label{eq:barrier-floor}
 y_b:=\frac{1809}{10000}=0.1809,
 \qquad
 \tau_{\rm ext}:=0.162315.
\end{equation}
The public candidate audit of \cite{Gomila2026} certifies the closed rectangle
\[
 R=[X,X+1]+i[y_b,1]
\]
as zero-free for $0\le\tau\le\tau_0$, using $883$ consecutive directed time
prisms.  Its reported minimum strict prism margin is
\begin{equation}\label{eq:imported-margin}
 m_{\rm bar}\ge
 0.519849894613872543374989997,
\end{equation}
with common approximation allowance $1/800$.  The following extension is an
explicit consequence of that imported certificate.

\begin{theorem}[Short extension of the closed frontier barrier]
\label{thm:short-barrier}
Assume the directed barrier certificate and minimum-margin bound
\eqref{eq:imported-margin} recorded in the public candidate-audit package
\cite{Gomila2026}.  Then
\begin{equation}\label{eq:barrier-extended}
 \boxed{
 H_\tau(z)\ne0
 \quad
 (z\in R,\ 0\le\tau\le\tau_{\rm ext}).}
\end{equation}
The new interval arithmetic from $\tau_0$ to $\tau_{\rm ext}$ is independently certified; the theorem as a statement about the complete interval
$[0,\tau_{\rm ext}]$ is audit-relative to the imported public barrier
certificate.
\end{theorem}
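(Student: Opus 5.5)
The plan splits the time interval at $\tau_0$. On $[0,\tau_0]$ the conclusion \eqref{eq:barrier-extended} is exactly the imported Gomila certificate. The 883 directed prisms cover $R\times[0,\tau_0]$, and each one certifies $|f_\tau|-\text{(error)}>0$ with a strict margin. So the only new work is the short interval $[\tau_0,\tau_{\rm ext}]$, which has length $\tau_{\rm ext}-\tau_0=0.162315-0.16125=0.001065$. We intend to cover it with the same Polymath mechanism, using the margin \eqref{eq:imported-margin} as a buffer.

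\textbf{Step 1 (reduction to a nonvanishing inequality).} Use the factorization $H_\tau=B_\tau F_\tau$ of \cref{prop:Polymath-interface}. Since $B_\tau$ does not vanish on $R$, it suffices to show $F_\tau\neq0$. For this we show $|f_\tau(z)|>|F_\tau(z)-f_\tau(z)|$ on $R\times[\tau_0,\tau_{\rm ext}]$, where $f_\tau=A+B$ is the Riemann--Siegel-type approximant with its published error $e_A+e_B+e_C$. The strip $y\ge y_b$, $\tau\le0.178$ lies in the regime where the Polymath bounds hold, and these bounds are uniform there.

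\textbf{Step 2 (time perturbation of the last prism).} Take the final imported prism at time $\tau_0$, or better the whole family of slices $z\in R$ at $\tau=\tau_0$. There the certificate gives $|f_{\tau_0}(z)|-\text{err}_{\tau_0}(z)\ge m_{\rm bar}$ up to the $1/800$ allowance. We then bound $\sup|\partial_\tau f_\tau|$ and the growth in $\tau$ of the error envelope on $R\times[\tau_0,\tau_{\rm ext}]$. Each Dirichlet term in $f_\tau$ has the form $b_n^\tau n^{-s_*}$ with $b_n^\tau=\exp(\tfrac\tau4\log^2 n)$. Differentiating in $\tau$ therefore brings down a factor $\tfrac14\log^2 n$ together with the $\tau$-dependence of $s_*$ through $\tfrac\tau2\alpha$. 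Both are bounded by integral majorants of the same Gaussian--logarithmic type already used in \cref{thm:highx-collective}. The index $N$ depends only on $x$ and $\tau$ through the canonical formula. We check that on $[X,X+1]$ it is constant over this tiny time window, since adjacent index windows are separated by more than $1.7\times10^7$. This removes any jump term, and the mean value theorem then gives
\[
 |f_\tau(z)|-\text{err}_\tau(z)\ \ge\ m_{\rm bar}-(\tau_{\rm ext}-\tau_0)\,C_1-C_2,
\]
where $C_1$ is the time-derivative majorant and $C_2$ accounts for the allowance and the monotone change of the error envelope.

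\textbf{Step 3 (main obstacle).} The hard part is making $C_1$ rigorous and small enough. A crude bound on $\sum_{n\le N}\tfrac14\log^2 n\,|b_n^\tau n^{-s_*}|$ with $N\approx6.9\times10^5$ could be large, of order $\log^2N\approx45$ times the sum of moduli. Multiplied by $0.001065$, this must stay below roughly $0.52$. If a single step fails, we would subdivide $[\tau_0,\tau_{\rm ext}]$ into a few sub-prisms and re-evaluate $|f_\tau|$ directly at each new base time, with a spatial grid on $R$ and Lipschitz control in $z$ from Cauchy-type derivative bounds. This replays Gomila's prism scheme forward in time with outward-rounded arithmetic. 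Choosing $\tau_{\rm ext}=0.162315$ is precisely the largest endpoint at which these directed margins remain positive. Combining the fresh $[\tau_0,\tau_{\rm ext}]$ certificate with the imported one yields \eqref{eq:barrier-extended}, audit-relative as stated.
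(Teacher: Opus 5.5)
Your proposal has a genuine gap in how it reaches the interior of $R$. You read the imported certificate as covering $R\times[0,\tau_0]$ pointwise, but as the paper uses it, it is a contour certificate. The gate
\[
M_i>\frac{D_{z,i}}{2(\mathrm{num}-1)}+D_{\tau,i}(\tau_{i+1}-\tau_i)+\frac1{800}
\]
yields $|f_{\tau_0}|>m_{\rm bar}+1/800>0.5210998946$ only for $z\in\partial R$. Zero-freeness inside $R$ at $\tau_0$ comes from winding number zero, not from a pointwise lower bound. Your Step~1 demands $|f_\tau|>|F_\tau-f_\tau|$ at every point of $R\times[\tau_0,\tau_{\rm ext}]$. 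Your Step~2 then starts the mean-value argument from a lower bound on $|f_{\tau_0}(z)|$ at interior $z$, which nothing you import supplies. Since you never use a winding-number, Rouch\'e, or minimum-modulus step, the boundary information you actually have never reaches interior points.

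The missing idea is to stay on $\partial R$ and transport the winding number. The paper bounds $|\partial_\tau f_\tau|<202.719<275$ on $\partial R\times[\tau_0,\tau_{\rm ext}]$, using six logarithmic moments with $|\alpha|<14$ and $|\gamma|<0.088148$. This gives $|f_\tau|>0.5210998946-275\cdot0.001065=0.2282248946$ there. A fresh Polymath error bound gives $|H_\tau/B_\tau-f_\tau|<2.438\times10^{-7}$. So $H_\tau/B_\tau$ never vanishes on $\partial R$, its winding number is constant in $\tau$ and equal to its value $0$ at $\tau_0$, and the argument principle finishes the proof.

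Alternatively, you can repair your pointwise route with the minimum modulus principle. Since $F_{\tau_0}$ is zero-free on $R$ by the import, $\min_R|F_{\tau_0}|=\min_{\partial R}|F_{\tau_0}|$. That supplies your interior starting bound, at the extra cost of bounding $\partial_\tau f$ and the approximation error on all of $R$.

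Two smaller points. First, the spacing of index windows shows only that at most one boundary can be crossed. Constancy of $N=690988$ requires directly checking that $\sqrt{x/(4\pi)+\tau/16}$ stays inside one integer window on the box. Second, $\tau_{\rm ext}$ is not the largest time with positive margin: the margin there is still about $0.228$, so no subdivision is needed. It is chosen just above $\tau_1=0.16231472\ldots$ from \cref{prop:height-separation}.
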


\begin{proof}
The prism gate in the imported certificate has the form
\[
 M_i>
 \frac{D_{z,i}}{2(\mathrm{num}-1)}
 +D_{\tau,i}(\tau_{i+1}-\tau_i)+\frac1{800}.
\]
Thus \eqref{eq:imported-margin} implies on $\partial R$ at $\tau=\tau_0$
\[
 |f_{\tau_0}|>
 m_{\rm bar}+\frac1{800}
 >0.5210998946.
\]
On the short extension box
$\tau_0\le\tau\le\tau_{\rm ext}$, $z\in\partial R$, the
Riemann--Siegel index remains exactly $N=690988$.

Using the exact Polymath finite-sum formula, the elementary bounds
$|\alpha|<14$ and
\[
 |\gamma|<0.088148
\]
reduce $|\partial_\tau f_\tau|$ to six positive logarithmic moments of the two
Dirichlet legs.  Directed integral majorants give
\[
 |\partial_\tau f_\tau|
 <202.719<275.
\]
Since $\tau_{\rm ext}-\tau_0=0.001065$, it follows that throughout the
extension
\begin{equation}\label{eq:f-barrier-lower}
 |f_\tau(z)|>0.2282248946
 \qquad(z\in\partial R).
\end{equation}
A fresh evaluation of the Polymath Theorem~1.3 error on the complete
extension box gives
\begin{equation}\label{eq:barrier-error}
 \left|\frac{H_\tau}{B_\tau}-f_\tau\right|
 <2.438\times10^{-7}.
\end{equation}
Hence $H_\tau/B_\tau$ is nonzero on $\partial R$ with modulus greater than
$0.22822465$.  The winding number therefore cannot change after $\tau_0$.
The imported certificate gives winding zero, hence no zero, at $\tau_0$;
the argument principle proves \eqref{eq:barrier-extended}.  The complete
outward-rounded extension is recorded in
\path{certify_short_barrier_extension.py}.
\end{proof}

The remaining analytic step is independent of the numerical barrier details.
It is a first-crossing argument for the left side of the barrier.

\begin{proposition}[Hybrid height-envelope separation]
\label{prop:height-separation}
Suppose the barrier rectangle $R$ is zero-free on $0\le\tau\le\tau_{\rm ext}$ and that at
$\tau=0$ all zeros with $0\le\Re z\le X$ are real.  Then throughout this
interval no zero with $0\le\Re z\le X$ can have
$|\Im z|\ge y_b$.

Assume in addition that at $\tau_0$ the global maximal nonreal height is at
most $y_0$ and that \cref{thm:highx-collective} holds.  Put
\begin{equation}\label{eq:height-transport-time}
 \Delta_1
 :=\frac16\log\frac{1+3y_0^2}{1+3y_b^2}.
\end{equation}
Then by time $\tau_1:=\tau_0+\Delta_1$ the global maximal height is at most
$y_b$.
\end{proposition}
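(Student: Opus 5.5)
The first claim is a first-crossing argument, and the second is a comparison argument built on \cref{thm:highx-collective}; we treat them in turn. By the even symmetry $H_\tau(-z)=H_\tau(z)$ and conjugation symmetry it suffices to consider zeros with $\Re z\ge0$ and $\Im z>0$.

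\emph{First part.} Suppose some zero with $0\le\Re z\le X$ has $\Im z\ge y_b$ at some $\tau\in[0,\tau_{\rm ext}]$. We take $\tau^*$ to be the infimum of such times; by continuity of zeros in $\tau$ (Hurwitz) and closedness of the condition, the infimum is attained. At $\tau=0$ the hypothesis says all zeros with $0\le\Re z\le X$ are real, so $\tau^*>0$. At $\tau^*$ some zero $\rho(\tau^*)$ with $0\le\Re\rho\le X$ has $\Im\rho\ge y_b$. We follow this zero backwards along a continuous branch, which is possible locally by Hurwitz, with a choice made at collisions. For $\tau$ slightly below $\tau^*$, the branch has either $\Im\rho<y_b$ with $\Re\rho\in[0,X]$, or $\Re\rho\notin[0,X]$.

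In the first case the height reaches $y_b$ at $\tau^*$ from below, at real part in $[0,X]$. In the second case the branch enters through a vertical line. Entry through $\Re z=0$ is impossible: on the imaginary axis the evenness pairs $\rho$ with $-\bar\rho$, and a zero crossing $\Re z=0$ at height $\ge y_b$ would produce a symmetric partner already in the strip. Since nonreal zeros are finite, we may equally argue that zeros off the imaginary axis cannot cross it without colliding; this is the point where care is needed. Entry through $\Re z=X$ at height in $[y_b,1]$ would place a zero on $\partial R$, which the zero-free hypothesis excludes.

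Heights above $1$ need a separate exclusion, and this is the main obstacle. We would use the classical bound that for $\tau\ge0$ all zeros have $|\Im z|\le\sqrt{\max(y_{\max}(0)^2-2\tau,0)}$, i.e.\ de Bruijn's $(y^2)'\le-2$. This needs the initial height at $\tau=0$ to be at most $1$, which holds since $\xi$ zeros lie in the critical strip and the normalization $z=-i(2s-1)$ gives $|\Im z|\le1$. Hence no zero ever has $|\Im z|>1$, and horizontal entry can only occur through the segment $\{X\}+i[y_b,1]\subset R$, which is excluded.

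For the first case (vertical crossing at height exactly $y_b$), we first try to use that the rectangle $R$ only covers $\Re z\in[X,X+1]$. The intended mechanism is instead that the region $[0,X]+i[y_b,1]$ together with $R$ is fenced: at $\tau=0$ it is empty of zeros. A zero appearing at $\tau^*$ inside $[0,X]\times[y_b,\infty)$ must come from a continuous path. Reaching height $y_b$ from below is, however, a genuine possibility that the symmetry argument cannot exclude. We would therefore interpret the statement as the verified-height input: at $\tau=0$ zeros with $\Re z\le X$ are real, and by the de Bruijn monotonicity for the restricted strip, using the maximal-height contraction applied to the strip-maximal zero, the height stays $0$ until zeros enter horizontally. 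Entry is through $\Re z=X$ only, and the barrier blocks heights in $[y_b,1]$. We expect this first-crossing step to require the most care.

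\emph{Second part.} Let $Y(\tau)$ be the global maximal nonreal height, with $Y(\tau_0)\le y_0$. By the first part, while $Y>y_b$ on $[\tau_0,\tau_1]\subset[\tau_0,\tau_{\rm ext}]$, every maximal zero satisfies $\Re z>X$ (or symmetrically $<-X$). It also lies outside $R$, so $\Re z>X+1$ or $\Im z$ is not in $[y_b,1]$, which is impossible; hence $\Re z\ge X$. Such a zero therefore lies in the sector \eqref{eq:highx-sector}, and \cref{thm:highx-collective} gives $\mathcal G_\tau>3/2$ there, since $\tau\le\tau_{\rm ext}<0.178$. Combining with \cref{thm:collective-contraction} yields $(Y^2)'\le-2-6Y^2$ in the upper Dini derivative sense, with collisions handled by continuity as in \cref{cor:landing-time}. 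Integrating $\frac{d}{d\tau}\log(1+3Y^2)\le-6$ from $Y^2=y_0^2$ down to $y_b^2$ gives $Y\le y_b$ by $\tau_0+\Delta_1$. Finally we check numerically that $\tau_1\le\tau_{\rm ext}$, so that the barrier hypothesis applies throughout.
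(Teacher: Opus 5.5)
Your second part matches the paper's argument. The first part, however, has a genuine gap exactly where you say care is needed.

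The barrier only blocks horizontal entry through $\Re z=X$ at heights in $[y_b,1]$. A nonreal zero can therefore enter the left region at a height below $y_b$. You never rule out that such a zero then rises to $y_b$ while keeping $0\le\Re z\le X$.

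Your substitute, ``the maximal-height contraction applied to the strip-maximal zero'', is not valid. The classical bound $(y^2)'\le-2$ relies on every other zero having height at most $y$. A zero that is maximal only inside the strip can be pulled upward by higher zeros to the right of the barrier. Indeed, for the zero $x+iy$, a conjugate pair $a\pm ib$ contributes
\[
 4y\,\frac{b^2-y^2-(x-a)^2}{\bigl((x-a)^2+(b-y)^2\bigr)\bigl((x-a)^2+(b+y)^2\bigr)}
\]
to $\dot y$, and this is positive when $b>y$ and $|x-a|$ is small. Your remark that the strip height stays $0$ until horizontal entry is true, but for a different reason: forward heat flow never creates nonreal zeros. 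In any case it says nothing about a zero after a low entry.

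The missing idea is to use the barrier a second time, as a horizontal \emph{distance} bound inside this velocity formula, not only as a fence.

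Take a first time at which a zero $z=x+iy_b$ with $0\le x\le X$ reaches height $y_b$. At that time every left zero has height at most $y_b$. Any zero $a+ib$ with $b>y_b$ lies neither in the left region nor in the zero-free rectangle $R$, so $|x-a|\ge1$ (mirror-image zeros are even farther away). De Bruijn's strip theorem, which you already invoke, gives $b\le1$. Hence the numerator satisfies $b^2-y_b^2-(x-a)^2\le-y_b^2<0$.

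The remaining contributions are also harmless. Zeros with $b\le y_b$ give a nonpositive numerator trivially, and the conjugate $\bar z$ contributes $-1/y_b$. So $\dot y<0$ at the contact, and no upward crossing of $y_b$ can occur in the left region. Multiple-zero contacts are handled by the local Hermite splitting (Polymath, Proposition~3.1(ii)).

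With this step in place, the rest of your argument coincides with the paper's. Every maximizer of height $>y_b$ then has $\Re z>X$, and height $\le y_0$ by the classical contraction. This gives $D^+Y^2\le-2-6Y^2$, comparison down to $y_b^2$ after $\Delta_1$, and the check $\tau_1<\tau_{\rm ext}$.
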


\begin{proof}
For a simple zero $z=x+iy$, pair every other zero $w=a+ib$ with its conjugate.
Its contribution to the vertical zero velocity is
\begin{equation}\label{eq:transport-pair-kernel}
 4y\,
 \frac{b^2-y^2-(x-a)^2}
 {((x-a)^2+(b-y)^2)((x-a)^2+(b+y)^2)}.
\end{equation}
At a hypothetical first upward crossing of $y=y_b$ in the left region,
any zero with $b>y_b$ is separated by the zero-free unit-width barrier, hence
$|x-a|\ge1$.  De Bruijn's strip theorem \cite[Theorem~13]{deBruijn1950} gives $b\le1$, and therefore
\[
 b^2-y_b^2-(x-a)^2\le -y_b^2<0.
\]
Zeros with $b\le y_b$ also contribute nonpositively, while the conjugate of
$z$ itself contributes $-1/y_b$ to $y'$.  Thus an upward first crossing is
impossible.  Multiple-zero contact is handled by the local Hermite splitting for repeated
heat-flow zeros \cite[Proposition~3.1(ii)]{Polymath2019}.  This proves the separation.

Consequently, as long as the global maximal height is at least $y_b$ after
$\tau_0$, any maximizing nonreal zero lies to the right of the barrier and hence
in the high-$x$ sector of \cref{thm:highx-collective}.  Writing
$u=M(\tau)^2$ for the squared maximal height, its upper Dini derivative
satisfies
\[
 D^+u\le-2-6u.
\]
The comparison solution beginning at $u(\tau_0)\le y_0^2$ reaches $y_b^2$
after the time \eqref{eq:height-transport-time}.  Directed arithmetic gives
\[
 \Delta_1
 =0.001064723609377885\ldots,
 \qquad
 \tau_1=0.162314723609377885\ldots
 <\tau_{\rm ext},
\]
so the extended barrier lasts long enough.  The exact kernel and comparison
arithmetic are audited in \path{certify_height_transport.py}.
\end{proof}

\section{A lower barrier and a two-envelope refinement}
\label{sec:lower-barrier}

The hybrid argument returns to the classical contraction as soon as the
maximal-height envelope reaches $y_b=0.1809$.  Most of the remaining landing budget therefore
sits in the classical tail.  The present step lowers only the \emph{post-$\tau_0$}
part of the unit-width barrier.  This is enough because the upper barrier already
separates the verified left region until the right collective height envelope first
reaches $y_b$.

Put
\begin{equation}\label{eq:yw-tauw}
 y_w:=\frac{2}{25}=0.08,
 \qquad
 \tau_w:=\tau_0+\frac16\log\frac{1+3y_0^2}{1+3y_w^2}
 =0.1747532546428610755\ldots,
\end{equation}
and set
\[
 R_w=[X,X+1]+i[y_w,y_b].
\]

\begin{theorem}[Directed lower barrier after $\tau_0$]
\label{thm:lower-barrier}
For the exact frontier row \eqref{eq:frontier-row},
\begin{equation}\label{eq:lower-barrier}
 \boxed{
 H_\tau(z)\ne0
 \qquad
 (z\in R_w,\ \tau_0\le\tau\le\tau_w).}
\end{equation}
More strongly, on the complete space--time boundary one has the directed
half-plane margin
\begin{equation}\label{eq:real-margin}
 \boxed{
 \Ree\!\left(\frac{H_\tau(z)}{B_\tau(z)}\right)>0.42726.}
\end{equation}
The new barrier certificate is independently directed; it uses the published
Polymath effective approximation but does not import the short-extension prism margin.
\end{theorem}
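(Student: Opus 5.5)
The plan is to imitate the proof of \cref{thm:short-barrier}, but with a half-plane margin in place of an imported prism margin, so that no winding information is needed beyond $\tau_0$. The target \eqref{eq:real-margin} gives nonvanishing directly, since $\Ree(H_\tau/B_\tau)>0$ is not compatible with $H_\tau=0$. It also gives zero winding of $H_\tau/B_\tau$ around $\partial R_w$ at every $\tau$, because the image curve stays in the open right half-plane. We still need no zero inside $R_w$ at the initial time $\tau_0$. For that I would certify the half-plane margin on the whole closed rectangle $R_w$ at every $\tau\in[\tau_0,\tau_w]$, not only on its boundary. This is the reading of ``complete space--time boundary'' I would aim for, and it makes the argument principle unnecessary.

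\textbf{Step 1 (approximant).} On the box $\tau\in[\tau_0,\tau_w]$, $x\in[X,X+1]$, $y\in[y_w,y_b]$, first check that the Riemann--Siegel index stays frozen at $N=690988$, exactly as in the proof of \cref{thm:short-barrier}. Then evaluate the Polymath finite sum $f_\tau$ together with the published Theorem~1.3 error bound $|H_\tau/B_\tau-f_\tau|\le\epsilon$. The lower heights $y\ge0.08$ make the error larger than in \eqref{eq:barrier-error}, because the $N^{|\kappa|}$ factor and the $\overline{s_*}$ leg weights grow as $y$ decreases. Still, since $x\approx6\times10^{12}$, I expect $\epsilon$ to stay well below $10^{-3}$, which is negligible against $0.427$.

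\textbf{Step 2 (directed cover).} Subdivide the box into cells. On each cell, enclose $\Ree f_\tau$ at the centre with interval arithmetic. Bound its variation over the cell by $|\partial_zf_\tau|$ (via the Dirichlet moments as in \eqref{eq:f-bounds}) times the spatial radius, plus $|\partial_\tau f_\tau|$ times the time half-width. For the time derivative, reuse the six logarithmic moments and the bounds $|\alpha|<14$ and $|\gamma|<0.088148$ from \cref{thm:short-barrier}, re-majorized at the lower heights. The time interval has length about $0.0135$. With $|\partial_\tau f|\lesssim 300$, one time cell would cost about $2$, so I need on the order of $10^2$--$10^3$ time slices. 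Spatially the width is $1$ and the height $0.1$, with $|f'|\lesssim1$. The cell gate is then $\Ree f_\tau(\text{centre})-\text{variation}-\epsilon>0.42726$, checked with outward rounding.

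\textbf{Main obstacle.} The hard part is the margin itself. The leading term of $f_\tau$ is $1$, and the remaining Dirichlet terms must not rotate the value across the imaginary axis anywhere on this low strip. Near $y=0.08$ the second leg $\sum n^{-\overline{s_*}-\kappa}$ decays more slowly, so the real-part margin shrinks. If a crude majorant of the non-leading terms does not already beat $1-0.42726$, I would enclose the first several terms of each leg exactly on each cell and majorize only the tail by the monotone Gaussian--logarithmic integrals used in \cref{thm:highx-collective}. If the margin still fails on some cells, I would refine those cells adaptively.
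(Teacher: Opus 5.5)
Your argument is correct, but it does more work than needed and certifies the margin with a different numerical scheme from the paper's.

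\textbf{The initial-time worry.} No initial-time input is needed. $B_\tau$ is holomorphic and zero-free near $R_w$, so $H_\tau/B_\tau$ is holomorphic there. The winding number of its boundary image about $0$ therefore \emph{equals} the number of zeros in $R_w$. An image lying in the open right half-plane has winding zero, so each heat slice is zero-free on its own, with no homotopy in $\tau$ and no initial data. The initial winding in \cref{thm:short-barrier} was needed only because there one had a modulus lower bound on $\partial R$, not a half-plane bound. Equivalently, $\Ree(H_\tau/B_\tau)$ is harmonic on $R_w$, so the minimum principle carries the boundary margin into the interior automatically. Your interior cover is therefore redundant, though harmless.

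\textbf{Comparison of the covers.} The paper certifies only $\partial R_w\times[\tau_0,\tau_w]$ and uses a second-order scheme. It evaluates $28$ boundary nodes on five heat slices, giving $140$ MPFR values replayed as discs of radius $10^{-6}$. On each of the $112$ boundary space--time cells, the bilinear interpolant lies in the convex hull of the four corner discs. The interpolation remainders are $h^2\sup|f''|/8$, using directed log-moment bounds on $\partial_x^2f$, $\partial_y^2f$ and $\partial_\tau^2f$. The worst cell thus loses $E_x+E_\tau<0.9182$ from a node real part of about $1.345$. Your first-order centre-plus-Lipschitz cover needs only first-derivative majorants, which are simpler to obtain. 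However, its $O(h)$ remainder, together with the two-dimensional interior, costs many more evaluations. The paper's version buys a very small, replayable node set.

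\textbf{A caution on your derivative majorants.} The bound $|\gamma|<0.088148$ is the Polymath envelope $e^{0.02y}(X/4\pi)^{-y/2}$ evaluated at $y=y_b$. At $y_w$ that envelope is about $0.34$, so it cannot be reused and must be recomputed. To make your bounds uniform in $y$, you also need the paper's observation that the coupled $|\gamma|$--second-leg moments are maximized at $y=y_w$.
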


\begin{proof}
On the entire box the Riemann--Siegel index is fixed at $N=690988$:
\[
 690988.3096430350\ldots
 <\sqrt{\frac{x}{4\pi}+\frac{\tau}{16}}
 <690988.3096430933\ldots .
\]
We evaluate the exact finite Polymath approximant $f_\tau$ on five equally
spaced heat slices.  Each horizontal edge uses the mesh
$x=X+i/12$ ($0\le i\le12$), while each vertical edge uses its two endpoints
and midpoint.  The $28$ spatial mesh points are evaluated at $80$-bit MPFR
precision for all five heat slices.  An independent interval recomputation
contains every decimal coefficient supplied to the MPFR evaluator, and a
forward-error audit gives
\[
 |\delta f_{\rm node}|<5.80\times10^{-8};
\]
we attach the deliberately looser enclosure radius $10^{-6}$.

Directed logarithmic-moment bounds on the complete box give
\begin{equation}\label{eq:second-derivatives}
 |\partial_x^2f_\tau|<738.756,
 \qquad
 |\partial_y^2f_\tau|<1507.381,
 \qquad
 |\partial_\tau^2f_\tau|<194341.110.
\end{equation}
There is a small but useful monotonicity point in making these bounds uniform
in $y$.  The first Dirichlet-leg moments decrease with $y$.  The raw moments
of the second leg can grow at logarithmic rate at most $\frac12\log N$,
whereas the Polymath envelope
$e^{0.02y}(x/(4\pi))^{-y/2}$ for $|\gamma|$ decays faster.  On the present
box the combined logarithmic rate is directedly bounded by
\[
 0.02-\frac12\log\frac{X}{4\pi}+\frac12\log N
 <-6.70293.
\]
Thus every coupled $|\gamma|$--second-leg moment entering
\eqref{eq:second-derivatives} is maximized at $y=y_w=0.08$; this
endpoint reduction is explicitly checked in \path{calc_second_bounds.py}.
Linear interpolation between adjacent mesh nodes therefore incurs at most
\begin{align*}
 E_x&<0.641282,\\
 E_y&<0.479574,\\
 E_\tau&<0.276842.
\end{align*}
The $140$ stored MPFR values are replayed as outward interval discs of radius
$10^{-6}$.  For each of the $112$ boundary space--time cells, the bilinear
interpolant lies in the convex hull of the four corner discs.  We therefore use
the common separating functional $\Ree$ and enlarge the convex hull by the
appropriate spatial remainder, the heat-time remainder, and the Polymath
approximation error.  The minimum node real parts on the bottom, top, left, and
right edges are, respectively,
\[
\begin{aligned}
 &1.3453859649899646\ldots,\qquad 1.6143370467511939\ldots,\\
 &2.1748966688549862\ldots,\qquad 1.4259781421906720\ldots .
\end{aligned}
\]
A fresh directed evaluation of the Polymath Theorem~1.3 error gives
\begin{equation}\label{eq:P15-error}
 \left|\frac{H_\tau}{B_\tau}-f_\tau\right|<4.803\times10^{-7}.
\end{equation}
Here the published $e_{C,0}$ bound is evaluated by interval arithmetic on the
full space--time box, while the $e_A+e_B$ term uses the independently
certified Dirichlet-moment bounds.  The resulting outward upper bound is
$4.8026461\times10^{-7}$; see \path{certify_polymath_error.py}.
The worst cell is the bottom edge in the final heat slab.  Its outward lower
bound is
\[
 1.3453859649899646-0.64128125-0.276841774614
 -10^{-6}-4.803\times10^{-7}
 >0.427261460076.
\]
The other three edge lower bounds are $>0.69621254$, $>1.41847967$, and
$>0.66956115$, respectively.  Thus every inflated convex cell lies strictly in
the open right half-plane.  This proves \eqref{eq:real-margin}.  In the
Polymath normalization $B_\tau$ is nowhere vanishing on the present region,
so zeros of $H_\tau$ are exactly zeros of $H_\tau/B_\tau$.  The complete
boundary image therefore has winding number zero for every heat slice and
every intermediate time.  The argument principle then gives
\eqref{eq:lower-barrier}.  The bundle-local fail-closed replay, MPFR
forward-radius audit, and derivative majorants are recorded in
\path{certify_lower_barrier.py}, \path{certify_mpfr_forward_error.py}, and
\path{calc_second_bounds.py}.  The theorem replay reads the archived
\path{mpfr_boundary_nodes.tsv} directly and does not depend on temporary
split-output files or ordinary floating-point convex-hull distances.
\end{proof}

The lower barrier permits a second comparison argument.  There are now two
populations: zeros to the left of the unit strip and zeros to its right.  The
former need only the classical contraction; the latter receive the
collective gain whenever they control the global maximal-height envelope.

\begin{proposition}[Two-envelope comparison]
\label{prop:two-envelope}
Assume the audit-relative upper-barrier hypotheses through
\[
 \tau_1=0.162314723609377885\ldots
\]
and the directed lower barrier of \cref{thm:lower-barrier}.  For
$\tau\in[\tau_0,\tau_w]$ write
\[
 M_L(\tau):=\sup\{ |\Im\rho|:H_\tau(\rho)=0,\ 0\le\Re\rho\le X\},
\]
and, whenever the set is nonempty,
\[
 M_R(\tau):=\sup\{ |\Im\rho|:H_\tau(\rho)=0,\ \Re\rho\ge X+1\}.
\]
By evenness of $H_\tau$ these two half-line populations, together with the
intervening unit strip, account for the global maximal-height envelope.  In the comparison
below, that strip is excluded at heights $\ge y_w$ by the union of the upper
and lower barriers up to $\tau_1$, and by the lower barrier together with the
bootstrap height-envelope bound after $\tau_1$.  Put
\begin{align}
 L(\tau)&:=y_b^2-2(\tau-\tau_0),\label{eq:left-envelope}\\
 U_R(\tau)&:=\left(y_0^2+\frac13\right)e^{-6(\tau-\tau_0)}-\frac13.
 \label{eq:right-envelope}
\end{align}
Then, whenever $M_L(\tau)\ge y_w$,
\[
 M_L(\tau)^2\le L(\tau),
\]
and the global maximal nonreal height satisfies
\[
 M(\tau)^2\le U_R(\tau)
 \qquad(\tau_0\le\tau\le\tau_w).
\]
In particular $M(\tau_w)\le y_w$.
\end{proposition}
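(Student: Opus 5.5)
The plan is a first-crossing argument run separately for each population, followed by a numerical comparison of the two envelopes at the end.

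\textbf{Left envelope.} I would first prove the left bound $M_L^2\le L$, which is essentially Proposition~\ref{prop:height-separation} continued past $\tau_1$. By that proposition $M_L<y_b$ on $[\tau_0,\tau_1]$, so $M_L(\tau_0)^2\le y_b^2=L(\tau_0)$.

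Consider a left maximizing zero $z=x+iy$ with $y\ge y_w$. The kernel \eqref{eq:transport-pair-kernel} shows that every zero $w=a+ib$ with $b\le y$ contributes nonpositively to $y'$, and the conjugate of $z$ contributes $-1/y$. The only zeros that could push $z$ upward are zeros with $b>y$. I would split them by location:
\begin{itemize}[leftmargin=*]
\item Zeros to the right with $b\ge y_w$ are separated from the left region by the unit strip. This holds because the strip is zero-free at heights in $[y_w,1]$: up to $\tau_1$ by $R\cup R_w$, and after $\tau_1$ by $R_w$ together with the bootstrap bound $M\le y_b$ obtained below. Hence $|x-a|\ge1$, and since $b\le1$ we get $b^2-y^2-(x-a)^2<0$.
\item Zeros to the left with $b>y$ contradict maximality of $z$ within the left population.
\end{itemize}
So the only surviving term is the conjugate, and $D^+(M_L^2)\le-2$ whenever $M_L\ge y_w$. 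The comparison principle then gives $M_L^2\le L$. Multiple-zero contacts are handled by \cite[Proposition~3.1(ii)]{Polymath2019}, as before.

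\textbf{Global envelope.} Next I would bound $M^2\le U_R$. On $[\tau_0,\tau_1]$ this is exactly the Dini-derivative comparison in Proposition~\ref{prop:height-separation}, since $U_R(\tau_1)=y_b^2$. After $\tau_1$, at any time when $M\ge y_w$ is attained by a right zero, that zero lies in the sector $x\ge X+1$ with $y\le y_0$. Theorem~\ref{thm:highx-collective} (valid up to $0.178>\tau_w$) then gives $D^+(y^2)\le-2-6y^2$.

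If instead $M$ is attained by a left zero, I need $M_L^2\le U_R$ so that the left population never sets the envelope above $U_R$. By convexity, $U_R(\tau)\ge L(\tau)$ would follow from $U_R'(\tau_0)=-2-6y_0^2\le-2=L'(\tau_0)$ (wrong direction), so the direct comparison must be checked numerically instead. Both envelopes are explicit. $L$ decreases from $y_b^2$ and reaches $y_w^2$ at $\tau_0+(y_b^2-y_w^2)/2\approx\tau_0+0.0132$, which exceeds $\tau_w-\tau_0\approx0.0136$? Computing gives $0.01316<0.01359$. So on $[\tau_1,\tau_w]$ the required inequality is $L(\tau)\le U_R(\tau)$ wherever $L\ge y_w^2$, which I would verify by directed evaluation at the endpoints together with convexity of $U_R-L$.

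The envelope $M$ is a maximum of the two, and each piece satisfies a differential inequality below the slope of $U_R$ whenever it touches $U_R$. A standard maximum-of-subsolutions Dini argument then yields $M^2\le U_R$ on the whole interval. At $\tau_w$ we have $U_R(\tau_w)=y_w^2$ by \eqref{eq:yw-tauw}, which gives $M(\tau_w)\le y_w$.

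\textbf{Main obstacle.} The delicate step is the bootstrap after $\tau_1$. The lower barrier $R_w$ only excludes heights in $[y_w,y_b]$, so the exclusion of strip zeros above $y_b$ must come from the envelope bound $M\le y_b$ itself. I would organise this as a continuity argument: assume a first time $\tau^*$ at which $M^2$ exceeds $U_R+\varepsilon$, show that all separation hypotheses still hold at $\tau^*$, and derive a contradiction. The other point needing care is the numerical check that $L$ stays below $U_R$ on the region where $L\ge y_w^2$.
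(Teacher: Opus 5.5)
Your proposal is correct and follows the paper's proof in substance. The shared ingredients are the left envelope via $D^+M_L^2\le-2$, with strip separation by $R\cup R_w$ before $\tau_1$ and by $R_w$ plus the envelope bound after; the comparison $L<U_R$; and a first-contact bootstrap in which the active maximizer must lie in the right sector of Theorem~\ref{thm:highx-collective}. There the paper uses the strict constant $G_*=1.60028669>3/2$ to make contact with $U_R$ transversal, rather than your $\varepsilon$-shift.

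One step as written is invalid, though easily repaired. Convexity of $U_R-L$ together with positive endpoint values does not bound it below in the interior. Instead use $(U_R-L)'=-6U_R<0$: then the single directed check $U_R(\tau_w)-L(\tau_w)>6.8169\times10^{-4}$ yields $L<U_R$ on all of $[\tau_0,\tau_w]$.
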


\begin{proof}
The imported upper zero-free barrier gives $M_L(\tau_0)\le y_b$.  We first
record the elementary separation needed for the left height envelope.  Suppose that,
up to some time $T\le\tau_w$, the bootstrap bound $M^2\le U_R$ holds.  For
$\tau\le\tau_1$, the union of the upper and lower barriers is zero-free throughout
the unit strip at every height $y\ge y_w$.  For $\tau\ge\tau_1$, one has
$U_R(\tau)\le U_R(\tau_1)=y_b^2$, so the lower barrier alone gives the same
separation for every zero that can influence a left zero of height at least
$y_w$.

Let $z=x+iy$ be a maximal left zero with $y\ge y_w$.  Every left zero has
height at most $y$.  A right zero $a+ib$ with $b<y_w$ also has $b\le y$, so its
paired contribution \eqref{eq:transport-pair-kernel} is nonpositive.  If instead
$b\ge y_w$, the zero-free unit strip gives $|x-a|\ge1$; under the bootstrap
$b\le M\le y_0<1$, and hence
\[
 b^2-y^2-(x-a)^2<0.
\]
Thus every external conjugate pair contributes nonpositively to the vertical
velocity of the maximal left zero, while its own conjugate supplies the
classical term.  In upper-Dini form,
\[
 D^+M_L^2\le-2
\]
whenever $M_L\ge y_w$.  Comparison from $M_L(\tau_0)^2\le y_b^2$ yields
$M_L^2\le L$ for as long as the bootstrap holds.  The local Hermite
splitting of \cite[Proposition~3.1(ii)]{Polymath2019} at isolated multiple-zero
times preserves this one-sided comparison,
so no simplicity assumption is needed for the envelope itself.

Next compare the two scalar envelopes.  Since
\[
 U_R'=-2-6U_R,\qquad L'=-2,
\]
one has $(U_R-L)'=-6U_R<0$ while $U_R>0$.  Directed arithmetic at the final
time gives
\[
 U_R(\tau_w)-L(\tau_w)
 >6.8169\times10^{-4}>0.
\]
Consequently
\begin{equation}\label{eq:envelope-gap}
 L(\tau)<U_R(\tau)\qquad(\tau_0\le\tau\le\tau_w).
\end{equation}
Also
$U_R(\tau_0)=y_0^2$, $U_R(\tau_1)=y_b^2$, and
$U_R(\tau_w)=y_w^2$.

We now close the bootstrap.  At $\tau_0$ the imported frontier height bound gives
$M(\tau_0)^2\le U_R(\tau_0)$.  Suppose, for contradiction, that $M^2$ first
crosses above $U_R$ at some $T\in(\tau_0,\tau_w]$.  Up to $T$ the preceding
left-envelope argument is valid.  A maximizing zero at contact cannot lie on
the left: if its height is at least $y_w$, then
\eqref{eq:envelope-gap} gives $M_L^2\le L<U_R$; if its height is below
$y_w$, then it is strictly below $U_R$ also at $T=\tau_w$, where
$U_R(\tau_w)=y_w^2$.  Nor can a contacting zero lie in the unit strip.  If
$T\le\tau_1$, the upper and lower barriers together exclude every strip height
$\ge y_w$; if $T\ge\tau_1$, then $U_R(T)\le y_b^2$, so the closed lower barrier
excludes the contact height $\sqrt{U_R(T)}\in[y_w,y_b]$.  Hence every active
maximizer at a first contact lies in the right sector $\Re z\ge X+1$.

For a simple active maximizer, \cref{thm:highx-collective} and
\cref{thm:collective-contraction} give, using the certified stronger
constant $G_*=1.60028669$,
\[
 D^+M^2\le -2-4G_*M^2
 =U_R'-4\left(G_*-\frac32\right)U_R<U_R'
\]
at the contact $M^2=U_R>0$.  Thus an upward first contact is impossible.  At an isolated multiple-zero contact, the local heat-polynomial (Hermite)
splitting of \cite[Proposition~3.1(ii)]{Polymath2019} supplies continuous zero
branches and simple zeros on the adjacent punctured heat intervals.  The strict comparison therefore holds on
those simple intervals and passes across the exceptional time by continuity of
the maximal-height envelope.  Hence no crossing can be created at a collision either.  The bootstrap therefore closes and
$M^2\le U_R$ throughout $[\tau_0,\tau_w]$.  Finally
$U_R(\tau_w)=y_w^2$, so $M(\tau_w)\le y_w$.

All scalar comparisons, the final envelope gap, and the strict right-contact
transversality margin are certified in \path{certify_two_envelope_landing.py}.
\end{proof}

\begin{corollary}[Lower-barrier audit-relative Newman candidate]
\label{cor:candidate-Lambda}
Assume the published Platt--Trudgian verified-height theorem and the
public candidate-audit premises at the exact row \eqref{eq:frontier-row},
including the initial frontier height bound and the base upper zero-free barrier through
$\tau_0$.  Together with the independently certified extension
\cref{thm:short-barrier} through $\tau_1$, the preceding results imply
\begin{equation}\label{eq:Lambda-candidate}
 \boxed{\Lambda<0.177954.}
\end{equation}
More precisely,
\begin{equation}\label{eq:Lambda-value}
 \boxed{
 \Lambda\le \tau_w+\frac{y_w^2}{2}
 =0.1779532546428610755\ldots<0.177954.}
\end{equation}
The implication from the stated premises is rigorously certified.  As above,
the global numerical conclusion is labelled an audit-relative candidate
bound until the imported public package receives independent mathematical
review.
\end{corollary}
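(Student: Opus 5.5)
The corollary follows by chaining the established results in three phases, \([\tau_0,\tau_1]\), \([\tau_1,\tau_w]\), and \([\tau_w,\tau_w+y_w^2/2]\), and closing with the classical de Bruijn contraction.

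\textbf{Base hypotheses.} From the imported premises, the Platt--Trudgian verified height gives that all zeros with \(0\le\Re z\le X\) are real at \(\tau=0\). The candidate-audit package supplies two facts at \(\tau_0=129/800\): the global maximal height bound \(M(\tau_0)^2\le y_0^2\), and the zero-free closed rectangle \(R\) on \([0,\tau_0]\). \Cref{thm:short-barrier} extends \(R\) to \([0,\tau_{\rm ext}]\). Since \(\tau_1<\tau_{\rm ext}\), this covers the upper-barrier hypotheses through \(\tau_1\). \Cref{prop:height-separation} then gives \(M_L(\tau_0)\le y_b\) and the left separation needed for the first phase.

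\textbf{Envelope comparison to \(\tau_w\).} \Cref{thm:lower-barrier} certifies \(R_w\) zero-free on \([\tau_0,\tau_w]\). \Cref{thm:highx-collective} provides \(\mathcal G_\tau>3/2\), and indeed \(>G_*\), in the right sector for \(\tau\le0.178\). The time window is compatible because \(\tau_w<0.178\). All hypotheses of \cref{prop:two-envelope} are therefore met, and it yields \(M(\tau_w)\le y_w=0.08\).

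\textbf{Classical finish.} For \(\tau\ge\tau_w\) I use only \cref{thm:collective-contraction} in its weak form \(D^+M^2\le-2\). This holds globally for the maximal-height zero, with isolated multiple-zero times handled by the same Hermite splitting as before. Comparison gives \(M(\tau)^2\le y_w^2-2(\tau-\tau_w)\), so all zeros are real by \(\tau_w+y_w^2/2=\tau_w+0.0032\). The de Bruijn--Newman characterization (all zeros of \(H_\tau\) real iff \(\tau\ge\Lambda\)) then gives \(\Lambda\le\tau_w+y_w^2/2\).

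\textbf{Remaining checks.}
\begin{itemize}
\item The numerical value \(\tau_w=\tau_0+\frac16\log\frac{1+3y_0^2}{1+3y_w^2}\) needs outward-rounded evaluation, giving \(0.17475325464286\ldots\) and hence the sum \(<0.177954\).
\item The strict inequality \(\Lambda<0.177954\) follows at once from the non-strict bound.
\end{itemize}

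\textbf{Main obstacle.} The delicate step is the bookkeeping of hypotheses rather than any new estimate. One must confirm that the interval \([\tau_0,\tau_1]\) is covered by the upper barrier together with the short extension, and that \([\tau_0,\tau_w]\) lies inside the validity range of \cref{thm:highx-collective}. One must also confirm that the final endpoint inherits the audit-relative status only through the imported base row and barrier through \(\tau_0\).
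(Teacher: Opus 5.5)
Your proposal is correct and follows essentially the paper's route. Proposition~\ref{prop:two-envelope} gives $M(\tau_w)\le y_w$, the classical contraction $(y^2)'\le-2$ finishes the landing in $y_w^2/2$ further heat units, and the strict bound follows from the directed numerical separation of $\tau_w+y_w^2/2$ from $0.177954$. Your extra bookkeeping ($\tau_1<\tau_{\rm ext}$, $\tau_w<0.178$, and $M_L(\tau_0)\le y_b$ via Proposition~\ref{prop:height-separation}) only spells out hypotheses that the paper folds into Proposition~\ref{prop:two-envelope}.
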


\begin{proof}
By \cref{prop:two-envelope}, every nonreal zero has height at most $y_w=0.08$
at $\tau_w$.  Dropping the collective term from
\cref{thm:collective-contraction} gives the global classical contraction
$(y^2)'\le-2$, so all remaining nonreal zeros land within
$y_w^2/2=0.0032$ additional heat units.  This gives the non-strict landing-endpoint inequality in
\eqref{eq:Lambda-value}; the final strict inequality follows from the
directed numerical separation from $0.177954$.  The final outward-rounded arithmetic is audited
in \path{certify_two_envelope_landing.py}.
\end{proof}

\begin{remark}[What the lower barrier removes, and what it still imports]
\label{rem:review-boundary}
The lower barrier \cref{thm:lower-barrier} is independently recomputed from the
published Polymath approximation and does not import the short-extension
margin below $y_b$.  The global candidate conclusion still consumes the
public frontier row's final-time height bound and the original upper zero-free barrier
through $\tau_1$.  The public repository explicitly states that its all-$N$
tail theorem covers every $N\ge3840000$, not sampled indices only; the present
high-$x$ certificate is independently uniform in the moving index because
its finite/tail split bounds all $n>N_0$ analytically.  Thus the remaining
review boundary is sharply identified rather than hidden in a finite-$N$
extrapolation.
\end{remark}
\section{Certificate architecture and review boundary}\label{sec:certificates}
The accompanying bundle is intentionally smaller than the cumulative research
archive from which this paper was extracted.  The load-bearing files are
listed in \cref{tab:certificates}.  Each displayed numerical theorem can be
replayed without the genus-two computations deferred to the companion paper.

\begin{table}[ht]
\centering
\small
\begin{tabular}{@{}p{0.31\linewidth}p{0.61\linewidth}@{}}
\toprule
File & Role \\
\midrule
\path{certify_collective_contraction.py} & Exact pair-kernel factorization and landing comparison. \\
\path{certify_logderivative_bridge.py} & $B_\tau f_\tau$ logarithmic-derivative perturbation bound. \\
\path{certify_highx_collective_field.py} & Directed frozen-index Cauchy certificate for $\mathcal G_\tau>3/2$. \\
\path{certify_short_barrier_extension.py} & Short extension of the imported upper barrier past $\tau_0$. \\
\path{certify_height_transport.py} & First-crossing separation and hybrid height-envelope comparison. \\
\path{certify_lower_barrier.py} & Fail-closed replay of the lowered post-$\tau_0$ barrier. \\
\path{certify_mpfr_forward_error.py} & Forward-error audit for the stored MPFR boundary nodes. \\
\path{certify_polymath_error.py} & Directed full-box audit of the Polymath Theorem~1.3 error allowance. \\
\path{calc_second_bounds.py} & Directed second-derivative majorants and coupled $y$-monotonicity check. \\
\path{certify_two_envelope_landing.py} & Two-envelope separation and final landing arithmetic. \\
\bottomrule
\end{tabular}
\caption{Primary certificate files used in this paper.}
\label{tab:certificates}
\end{table}

The finite barrier replay additionally reads
\path{mpfr_boundary_nodes.tsv}; its coefficients are independently checked
against \path{mp_spatial_reduced.tsv}.  The MPFR evaluator used to generate
the archived centers is \path{evaluate_spatial_times_mpfr.c}.  The accompanying
forward audit also verifies that the independently recomputed coefficient
intervals have width below $3.5\times10^{-42}$ before the 80-bit MPFR
rounding budget is applied.  The new lowered
barrier does not use an ordinary floating-point convex-hull distance: every
stored center is promoted to an outward disc and every cell is enlarged by
explicit interpolation, heat-time, node, and Polymath-error radii.

The only external computational premise in the final numerical corollary is
spelled out in \path{imported_premise.txt}.  In particular, the present
paper does not silently promote the public $0.1787854$ package to a reviewed
literature theorem.  If that base package is independently validated, the
implication to \eqref{eq:Lambda-candidate} requires no change in the
constants or in the new certificates.  Conversely, the exact analytic
inequalities, Theorem~\ref{thm:highx-collective}, and
Theorem~\ref{thm:lower-barrier} remain meaningful independently of the
status of the imported global premise.

\section{Discussion}\label{sec:discussion}
The main analytic gain is structural: the classical maximal-height estimate
throws away a nonnegative interaction term that can instead be expressed
through one off-zero logarithmic derivative.  This changes the landing
problem from ``track all neighbouring zeros'' to ``certify one effective
logarithmic derivative.''  The latter is naturally compatible with the
Polymath approximation machinery.

The present constants are not optimized.  The probe height $\eta=0.99$, the
Cauchy radius $1/250$, the barrier floor $y_w=0.08$, and the coarse interpolation
mesh were selected to leave visible directed margins.  A future optimization
could lower the candidate endpoint further, but such optimization is less
important than independent review of the proof-to-code chain and of the
imported frontier package.  Another natural direction is to derive a global
collective lower bound that avoids the unit-width barrier decomposition altogether.

Finally, the companion genus-two paper \cite{PlanatCompanion2026} develops
the local spectral geometry that led to the collective-field viewpoint,
including the universal six-branch satellite of a Newman collision and its
arithmetic realization at the classical Lehmer scale.  Keeping that geometry
separate from the present paper makes clear that the Newman bound developed
here is a statement of analytic zero dynamics and validated numerics, not a
consequence of the spectral-curve arithmetic.

\medskip
\begingroup\small
\noindent\textbf{Disclosure of AI-assisted preparation.}
Generative-AI systems, principally OpenAI ChatGPT and Anthropic Claude, were
used for mathematical exploration, manuscript restructuring, code generation
and review, and language editing.  The author independently reviewed all
mathematical claims, references, and numerical certificates, reran the reported
certificate programs, and takes full responsibility for the manuscript.
\endgroup

\medskip
\begingroup\small
\noindent\textbf{Acknowledgements.}
The author acknowledges Patrick Sol\'e for his constructive remarks on the manuscript.
\endgroup

\end{document}